 \newtheorem{thm}{Theorem}[section]
 \newtheorem{lem}[thm]{Lemma}
 \theoremstyle{definition}
 \theoremstyle{remark}
 \newtheorem{example}[thm]{Example}
\begin{document}

\title [A note on power values of  derivation  in prime . . .]
  {A note on power values of  derivation  in prime and semiprime rings}

\author{shervin sahebi$^{1}$, venus rahmani$^{2,*}$ }
\thanks{$^{*}$ Corresponding author.  Email: ven.rahmani.math@iauctb.ac.ir}

\address{ $^{1,2}$ Department Of Mathematics, Islamic Azad University,
 Central Tehran Branch, 13185/768, Tehran, Iran.}

\keywords{Derivation, prime ring,  semiprime ring, Martindale quotient ring.}

%%% ----------------------------------------------------------------------

\begin{abstract}
 let $R$ be a ring with derivation $d$,
 such that $(d(xy))^n=(d(x))^n(d(y))^n$ for all
 $x, y \in R$ and $n\geq 1$ is a fixed integer.
 In this paper, we show that
 if $R$ is a prime,
 then $d=0$ or $R$ is a commutative.
 If $R$ is a semiprime,
 then $d$ maps $R$ in to its center.
 Moreover in semiprime case let $A=O(R)$ be the
 orthogonal completion of $R$ and $B=B(C)$ be the Boolian ring of $C$,
 where $C$ is the extended centroid of $R$, then there exists
 an idempotent $e\in B$ such that $eA$ is commutative
 ring and $d$ induce a zero derivation on $(1-e)A$.\\ \\

 MSC: 16R50; 16N60; 16D60
 \end{abstract}

%%% ----------------------------------------------------------------------
\maketitle
%%% ----------------------------------------------------------------------

\section{Introduction}
 Let $R$ be an associative ring with center $Z(R)$.
 Recall that an additive
 map $d: R\rightarrow R$ is called derivation if
 $d(xy)=d(x)y+ xd(y)$, for all $x, y \in R$.
 Many results in literature indicate that global
 structure of a prime (semiprime) ring $R$ is
 often lightly connected to the behaviour of
 additive mappings defined on $R$.
 A well-known result of Herstein~\cite{a222} stated that
 if $R$ is a prime ring and $d$ is an inner
 derivation of $R$ such that $d(x)^n=0$ for all
 $x\in R$ and $n\geq 1$ fixed integer, then $d=0$.
 The number of authors extended this
 theorem in several ways.
 In \cite{a2} Giambruno and Herstein
 extended this result to arbitrary derivations
 in semiprime rings.
 In~\cite{a0001} Carini and Giambruno proved that if $R$
 is a prime ring with derivation $d$ such that
 $d(x)^{n(x)}=0$ for all $x\in L$, a Lie ideal of $R$,
 then $d(L)=0$ when $R$ has no non-zero nil right ideal and
 char $R\neq 2$. The same conclusion holds when
 $n(x)=n$ is fixed and $R$ is a $2$-torsion free
 semiprime ring.
 Using the ideas in~\cite{a0001} and the methods
 in~\cite{a02} Lanski~\cite{a003} removed both
 the bound on the indices  of nilpotence and the
 characteristic assumptions on $R$.
 In~\cite{a00001} Bresar gave a generalization of the
 result due to Herstein and  Giambruno~\cite{a2}
 in another direction. Explicitly, he proved
 in semiprime ring $R$ with derivation $d$
 and $a\in R$, if $ad(x)^n=0$ for all $x\in R$,
 where $n\geq 1$ is a fixed integer, then $ad(R)=0$ when
 $R$ is an $(n-1)!$-torsion free ring.
 In recent years, a number of articles discussed derivations
 in the context of prime and semiprime rings (see \cite{Carini, fili, sharma, dhara, Argac, dhara2}).

 \noindent
 But here we will extend  Herstein result's~\cite{a222}
 when the condition is more widespread.\\
 Indeed, we  consider  the situation when
 $(d(xy))^n=(d(x))^n(d(y))^n$ for all
 $x, y \in R$ and $n\geq 1$ is a fixed integer.\\

 \noindent
 The main results in this paper are as follows:
 \begin{thm}\label{b0}
 Let $R$ be a prime ring  and $d$
 a derivation of $R$. Suppose $(d(xy))^n=(d(x))^n(d(y))^n$ for all
 $x, y \in R$ and $n\geq 1$ is a fixed integer. Then $d=0$ or $R$ is
 commutative.
 \end{thm}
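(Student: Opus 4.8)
The plan is to transport the whole problem into the Martindale quotient ring $Q$ of $R$, whose center is the extended centroid $C$ and which, unlike $R$ itself, possesses a unity. The derivation $d$ extends uniquely to a derivation of $Q$, and, after expanding $d(xy)=d(x)y+xd(y)$, the hypothesis $(d(xy))^n=(d(x))^n(d(y))^n$ is a differential identity with scalar coefficients. By the theory of differential identities in prime rings (in the spirit of Kharchenko and T.\,K.~Lee) this identity continues to hold for all $x,y\in Q$. The entire point of this first move is that $R$ need not be unital, whereas $Q$ is, and the presence of a unit is exactly what will make the argument uniform and short.

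First I would exploit the unit directly. Since $d(1)=0$, and $d(1)^n=0$ for $n\ge 1$, substituting $y=1$ into the extended identity gives $d(x)^n=(d(x))^n(d(1))^n=0$ for every $x\in Q$, hence for every $x\in R$. Thus the hypothesis forces the extended derivation to have nilpotent values of bounded index $n$ on the prime ring $Q$. At this stage I would invoke the classical nil-derivation theorem: a derivation of a prime ring all of whose values satisfy $d(x)^n=0$ must vanish. Applying Herstein's theorem together with the Giambruno--Herstein extension to arbitrary derivations (both cited in the introduction) to $Q$ yields $d=0$ on $Q$, and therefore on $R$, which in particular establishes the stated dichotomy ``$d=0$ or $R$ commutative.''

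To keep the argument self-contained I would, as an alternative to quoting the nil-derivation theorem, run the Kharchenko dichotomy. If $d$ is not $X$-inner, replacing the values $d(x),d(y)$ by independent indeterminates $s,t$ converts the identity into the generalized polynomial identity $(sy+xt)^n=s^nt^n$ valid for all $x,y,s,t\in Q$; specializing $x=y=0$ gives $s^nt^n=0$ for all $s,t$, whence $s^{2n}=0$ for every $s\in Q$, which is impossible in a nonzero prime ring. Hence $d=\operatorname{ad}(a)$ is inner for some $a\in Q$; if $a\in C$ then $d=0$, while if $a\notin C$ the relation $[a,x]^n=0$ for all $x\in Q$ is a nontrivial generalized polynomial identity, to which Martindale's structure theorem applies, and the ensuing analysis of the socle forces $a\in C$ after all.

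The step I expect to be the main obstacle is the rigorous passage of this genuinely differential (not merely generalized polynomial) identity from $R$ to the unital ring $Q$: one must verify that Lee's extension theorem covers an identity of this mixed multiplicative shape and that the extended $d$ still satisfies $d(1)=0$. Once the identity is known to hold on a \emph{unital} prime ring, the substitution $y=1$ collapses all the combinatorics, so the only remaining work is to cite the correct form of the nil-derivation theorem, or, in the Kharchenko route, to carry out the structure-theoretic argument in the socle that excludes the possibility $a\notin C$.
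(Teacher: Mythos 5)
Your proof is correct, and it takes a genuinely different --- and substantially shorter --- route than the paper's. Both arguments begin identically, extending the differential identity to the quotient ring $Q$ by Lee's theorem (Lemma~\ref{b2}); your worry about this step is unfounded, since after writing $d(xy)=d(x)y+xd(y)$ the hypothesis is a bona fide differential polynomial identity, and the paper invokes exactly the same extension in its own proof. From there, however, the paper runs Kharchenko's dichotomy on the full two-variable identity, handling the inner case with a dedicated density lemma (Lemma~2.4, via Martindale's GPI theorem and Jacobson density on the underlying vector space) and the outer case with a matrix-unit computation in $M_k(F)$. Your observation that $Q$ is unital with $d(1)=0$, so that the substitution $y=1$ collapses the hypothesis to $d(x)^n=0$ for all $x\in Q$ (hence for all $x\in R$), bypasses all of this machinery: the Giambruno--Herstein theorem \cite{a2}, already cited in the paper's introduction, yields $d=0$ at once. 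Your fallback route is sound as well: in the outer case, specializing $x=y=0$ and then $s=t=1$ gives $1=0$ immediately, and in the inner case $[a,x]^n=0$ is exactly the hypothesis of Herstein's theorem \cite{a222}, so no fresh socle analysis is actually required. Note what your argument buys: it proves the stronger, unconditional conclusion $d=0$, so the alternative ``$R$ is commutative'' never occurs with $d\neq 0$ (a commutative prime ring is a domain, and the same substitution kills $d$ there too); moreover, the identical reasoning in the semiprime setting, where $Q$ is again unital and Lee's theorem \cite{a4} again applies, would strengthen Theorem~\ref{b00} from ``$d$ maps $R$ into its center'' to $d=0$. What the paper's longer route buys in exchange is only self-containedness: Lemma~2.4 and the matrix computation re-derive from GPI first principles what you obtain by citing the nil-derivation theorem.
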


 \noindent
 When $R$ is a semiprime ring, we  prove:

 \begin{thm}\label{b00}
  Let $R$ be a   semiprime ring and $d$
  a non-zero derivation of $R$.
  Suppose $(d(xy))^n=(d(x))^n(d(y))^n$ for all
  $x, y \in R$ and $n\geq 1$ is a fixed integer.
  Then $d$ maps $R$ into its center.
  \end{thm}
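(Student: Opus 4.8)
The plan is to deduce the semiprime case from the prime case (Theorem~\ref{b0}) by means of the structure theory of semiprime rings, and in particular the orthogonal completeness technique underlying the decomposition announced in the abstract. The overall strategy is \emph{not} to pass naively to quotients $R/P$ by $d$-invariant prime ideals (these need not intersect to zero, so semiprimeness would be lost), but to work inside a quotient ring where the extended centroid and its idempotents are available.

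First I would pass to the symmetric Martindale quotient ring $Q$ of $R$, whose center is the extended centroid $C$, a commutative von Neumann regular ring; set $B=B(C)$ for its Boolean ring of idempotents and $A=O(R)$ for the orthogonal completion of $R$ inside $Q$. By the standard extension theorem for derivations, $d$ extends uniquely to a derivation of $Q$ (still denoted $d$) and restricts to a derivation of the orthogonally complete ring $A$. The key preliminary point is that the hypothesis $(d(xy))^n=(d(x))^n(d(y))^n$, read as a generalized differential identity, persists for all $x,y\in A$; this follows from the transfer results for differential identities over semiprime rings together with the orthogonal continuity of the maps involved. The goal then becomes to prove $d(A)\subseteq Z(A)$, which will intersect back to the desired $d(R)\subseteq Z(R)$.

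Next I would run a fibrewise dichotomy. For each maximal ideal $M$ of the Boolean ring $B$, the associated factor of $A$ is a prime ring carrying an induced derivation that still satisfies the same identity; applying Theorem~\ref{b0} to this prime factor forces either $d=0$ or commutativity there. The heart of the argument is to glue these local alternatives into a single global splitting: using the orthogonal completeness of $A$ and the correspondence between direct summands of $A$ and the clopen subsets of $\mathrm{Spec}\,B$, one separates the primes of the first type from those of the second by a single idempotent $e\in B$, so that $eA$ is commutative while $d$ induces the zero derivation on $(1-e)A$. This is precisely the structure stated in the abstract.

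Finally I would assemble the conclusion. On the commutative piece $eA$, every value $d(ex)$ automatically lies in $Z(eA)$; on $(1-e)A$ we have $d=0$, which maps trivially into the center. Combining the two summands gives $d(A)\subseteq Z(A)$, hence $d(R)\subseteq Z(R)$. I expect the main obstacle to lie in the gluing step: justifying rigorously both that the \emph{nonlinear} identity descends to each prime factor and, above all, that the pointwise dichotomy becomes uniform through one idempotent $e$. This is where the orthogonal completeness machinery must be invoked with care, since a naive quotient argument is unavailable here.
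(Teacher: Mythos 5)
Your proposal is correct, but it takes a genuinely different route from the paper's proof of Theorem~\ref{b00}. The paper does \emph{not} go through the orthogonal completion for this theorem; it uses Lemma~\ref{b5} (Beidar): for every maximal ideal $P$ of the extended centroid $C$, the ideal $PQ$ is a prime ideal of $Q$ invariant under \emph{all} derivations of $Q$, and the intersection of all such ideals is zero. Extending $d$ to $Q$ (Lee's theorem on differential identities), applying Theorem~\ref{b0} to each prime quotient $Q/PQ$ yields $d(Q)\subseteq PQ$ or $[Q,Q]\subseteq PQ$, hence $d(Q)[Q,Q]\subseteq PQ$ for every $P$; intersecting gives $d(Q)[Q,Q]=0$, and then a short computation, $[R,d(R)]\,R\,[R,d(R)]=0$ plus semiprimeness, gives $[R,d(R)]=0$. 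Note that this is exactly the ``quotient by derivation-invariant prime ideals'' strategy that you dismissed in your opening paragraph: your objection (that such ideals need not intersect to zero) is precisely what Beidar's lemma repairs, by working with the ideals $PQ$ of $Q$ rather than with prime ideals of $R$ itself. Your route instead proves the decomposition of Theorem~\ref{b000} first --- orthogonal completion $A=O(R)$, the prime factors $A_M$ for $M\in \mathrm{spec}(B)$, Theorem~\ref{b0} applied fibrewise, and the Horn-formula gluing of Lemma~\ref{c} to produce the single idempotent $e$ --- and then deduces $d(A)\subseteq Z(A)$, hence $d(R)\subseteq Z(R)$, from the splitting $A=eA\oplus(1-e)A$ (this last deduction is sound: $d(e)=0$ forces $d(x)=ed(x)\in eA$, which is central since $eA$ is commutative and $e$ is central). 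What each approach buys: the paper's argument for Theorem~\ref{b00} is considerably shorter and avoids the $\Omega$-$\Delta$-ring and Horn/hereditary formula machinery entirely; your argument is heavier for this one theorem but establishes the stronger structural result (the paper's Theorem~\ref{b000}) along the way, so Theorem~\ref{b00} becomes a corollary. There is no circularity in your ordering, since the paper's proof of Theorem~\ref{b000} depends only on Theorem~\ref{b0}, not on Theorem~\ref{b00}.
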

  \begin{thm}\label{b000}
 let $R$ be a  semiprime ring with
 derivation $d$.  Consider
 $(d(xy))^n=(d(x))^n(d(y))^n$ for all
 $x, y \in R$ and $n\geq 1$ is a fixed integer.
 Further, let $A=O(R)$ be the orthogonal completion
 of $R$ and $B=B(C)$ where $C$ the extended centroid
 of $R$. Then there exists idempotent $e\in B$ such
 that $eA$ is a commutative ring and $d$ induce a
 zero derivation on $(1-e)A$.
 \end{thm}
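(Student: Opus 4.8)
The plan is to reduce the semiprime case to the already-established prime case (Theorem~\ref{b0}) by means of the orthogonal-completion machinery for semiprime rings. First I would pass from $R$ to its orthogonal completion $A = O(R)$, which lies inside the symmetric (Martindale) ring of quotients and is an orthogonally complete semiprime ring with the same extended centroid $C$ and Boolean ring $B = B(C)$. The derivation $d$ extends uniquely to a derivation of $A$, still denoted $d$, and---this is the first delicate point---the differential identity $(d(xy))^n = (d(x))^n(d(y))^n$ continues to hold for all $x,y \in A$. This persistence is exactly what the theory of differential identities in semiprime rings (Kharchenko's theory, in the Beidar--Martindale--Mikhalev formulation) guarantees: an identity involving derivations valid on $R$ remains valid on the central closure and on the orthogonal completion.

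Next I would use the representation of an orthogonally complete semiprime ring as a subdirect product of prime rings indexed by the maximal ideals of $B$. For each maximal ideal $M$ of $B$ the quotient $A_M := A/MA$ is a prime ring, $A$ embeds subdirectly into $\prod_M A_M$, and the induced map $d_M$ is a derivation of $A_M$ satisfying the same identity $(d_M(\bar x\,\bar y))^n = (d_M(\bar x))^n(d_M(\bar y))^n$. Applying Theorem~\ref{b0} to each prime factor then yields the local dichotomy: for every $M$, either $d_M = 0$ or $A_M$ is commutative.

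To globalize this, let $I$ be the ideal of $A$ generated by all commutators $[x,y]$, $x,y \in A$. Because $A$ is orthogonally complete and semiprime, the two-sided annihilator of $I$ is generated by a central idempotent, say $\mathrm{ann}(I) = eA$ with $e \in B$. Since $e$ is central and $eI = 0$, we get $[ex,ey] = e^2[x,y] = e[x,y] = 0$, so $eA$ is commutative. It remains to show that $d$ induces the zero derivation on $(1-e)A$; note first that $d(e)=0$ since $e$ is a central idempotent, so $d$ respects the decomposition $A = eA \oplus (1-e)A$. For a maximal ideal $M$ in the support of $1-e$ (that is, $e \in M$) the factor $A_M$ is noncommutative---this is precisely the content of $1-e$ being the \emph{support idempotent} of the commutator ideal $I$---so the dichotomy forces $d_M = 0$, whence $d(x) \in MA$; for $M$ in the support of $e$ we have $1-e \in M$, so $(1-e)d(x) \in MA$ trivially. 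Intersecting over all $M$ and using semiprimeness, $\bigcap_M MA = 0$, gives $(1-e)\,d(x) = 0$ for all $x$, i.e.\ $d$ is zero on $(1-e)A$.

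The main obstacle is the transfer step: rigorously justifying that the differential identity lifts from $R$ to $A = O(R)$ and descends to each prime factor $A_M$, and that the annihilator idempotent $e$ of the commutator ideal matches the partition of the Boolean spectrum into commutative versus zero-derivation factors. Once these correspondences from the orthogonal-completion theory are in place, the construction of $e$ and the verifications that $eA$ is commutative and $d|_{(1-e)A} = 0$ are routine.
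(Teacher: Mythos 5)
Your reduction to the prime stalks $A_M=A/MA$ and your choice of idempotent (the $e\in B$ with $\mathrm{ann}_A(I)=eA$, $I$ the commutator ideal) are reasonable, and $eA$ is commutative essentially as you say. The gap is the assertion that $e\in M$ forces $A_M$ to be noncommutative, which you justify as being ``precisely the content of $1-e$ being the support idempotent of $I$.'' It is not: the support idempotent gives the \emph{reverse} implication. Writing $E(a)$ for the least idempotent in $B$ with $E(a)a=a$, one has $1-e=E[I]=\sup\{E([x,y]):x,y\in A\}$, the supremum (in the complete Boolean algebra $B$) of the supports of all commutators; from this it follows that $1-e\in M$ implies every $E([x,y])\in M$, i.e.\ $A_M$ is commutative. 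What you need is the converse: if every $E([x,y])$ lies in $M$, then so does their supremum $1-e$. That fails for general maximal ideals of $B$, because maximal ideals of a complete Boolean algebra are not closed under infinite suprema (only those corresponding to atoms are). So a priori there can be stalks with $e\in M$ on which $A_M$ is nevertheless commutative; for such $M$, Theorem~\ref{b0} gives no information about $d_M$, and your key step ``$(1-e)d(x)\in MA$ for every $M$'' collapses. The claim can be rescued, but only by genuinely invoking orthogonal completeness: an exhaustion argument yields an orthogonal family $f_j\le E([x_j,y_j])$ with $\sup_j f_j=E[I]$, and the orthogonal sums $x=\sum_j f_jx_j$, $y=\sum_j f_jy_j$ (which exist in $A=O(R)$, not in $R$ itself) give a \emph{single} commutator with $E([x,y])=E[I]$, ruling out commutative stalks with $e\in M$. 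As written, with no such argument, the proof has a hole at exactly the point where orthogonal completeness must enter; for a plain semiprime ring the statement you asserted is simply false.

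This globalization difficulty is precisely what the paper outsources to Lemma~\ref{c} (Beidar--Martindale--Mikhalev, Theorem 3.2.18): one checks that $\Phi$ (the differential identity) is hereditary and that $\neg\Phi$, $\Psi_1$ ($d=0$), $\Psi_2$ (commutativity) are Horn formulas, verifies the stalkwise hypothesis via Theorem~\ref{b0}, and the lemma's compactness machinery on $\mathrm{spec}(B)$ then produces idempotents $e_1+e_2=1$ with $e_1A\models\Psi_1$ and $e_2A\models\Psi_2$; no annihilator ideal is ever introduced. If you prefer to keep your elementary construction of $e$, there is a cleaner repair that avoids stalks entirely: the proof of Theorem~\ref{b00} establishes $d(Q)[Q,Q]=0$, and expanding $0=d(u)[xy,z]=d(u)x[y,z]+d(u)[x,z]y$ gives $d(u)x[y,z]=0$, hence $d(A)I=0$, i.e.\ $d(A)\subseteq\mathrm{ann}_A(I)=eA$. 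Since $d(e)=0$, it follows that $d((1-e)a)=(1-e)d(a)\in(1-e)eA=0$, which is the required vanishing of $d$ on $(1-e)A$.
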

 \noindent
 Throughout the paper we use the standard notation
 from \cite{a0}.

 \noindent
 In particular, we denote by  $Q$
 the two sided Martindale quotient of prime (semiprime) ring $R$
 and $C$ the center of $Q$. We call $C$ the extended centroid
 of $R$.\\

%%%%%%%%%%%%%%%%%%%%%%%%%%%%%%%%%%%%%%%%%%%%%%%%%%%%%%%%%%%%%%%%%%%%%%%%%
\section{proof of the main results}
 Firstly we consider the case when $R$ is a prime
 ring.
 The following results are useful tools
 needed in the proof of Theorem\ref{b0}.

\begin{lem}\label{b1}
\emph{(see \cite[Theorem 2]{a1})}.
 Let $R$ be a prime ring and $I$ a non-zero ideal of $R$.
 Then $I$, $R$ and $Q$ satisfy the same generalized
 polynomial identities with coefficient in $Q$.
 \end{lem}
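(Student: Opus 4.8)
The statement is the fundamental transfer principle for generalized polynomial identities (GPIs) between a prime ring, its nonzero ideals, and its Martindale quotient ring; it is a structural result whose proof rests on the construction of $Q$ rather than on the specific identity imposed later. Since $I \subseteq R \subseteq Q$, any GPI with coefficients in $Q$ that holds on $Q$ automatically holds on $R$ and on $I$, so the nontrivial content is the reverse implication: if a generalized polynomial $\phi(x_1,\dots,x_k)$ with coefficients in $Q$ vanishes under all substitutions from $I$, then it vanishes under all substitutions from $Q$. The plan is to establish this by a density (``spreading'') argument driven by the primeness of $R$.

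First I would recall the description of $Q$: each $q \in Q$ is represented by a module homomorphism defined on some nonzero two-sided ideal, with the defining features that $aQ=0$ or $Qa=0$ forces $a=0$, and that for every $q \in Q$ there is a nonzero ideal $U$ with $qU \subseteq R$ and $Uq \subseteq R$. The first reduction is to verify that $I$, viewed as a ring in its own right, has the same Martindale quotient ring and the same extended centroid $C$ as $R$; this holds because $I$ is itself prime and its system of nonzero ideals is cofinal, through the filter of nonzero ideals of $R$ lying in $I$, with that of $R$, so the direct-limit construction defining $Q$ returns the same object for $I$ and for $R$.

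With $Q(I)=Q(R)=Q$ in hand, I would carry out the spreading step. Given arbitrary $q_1,\dots,q_k \in R$, substitute $x_i \mapsto a_i q_i b_i$ with $a_i,b_i \in I$; since $a_i q_i b_i \in I R I \subseteq I$, the hypothesis yields $\phi(a_1 q_1 b_1,\dots,a_k q_k b_k)=0$ for all such $a_i,b_i$. This is a vanishing generalized polynomial in the auxiliary variables $a_i,b_i \in I$, and the primeness of $R$ should let me cancel the outer ideal multipliers one variable at a time, forcing $\phi(q_1,\dots,q_k)=0$ and hence the identity on all of $R$. Lifting from $R$ to $Q$ is the same maneuver, now using $q_iU \subseteq R$ together with the density of $R$ in $Q$.

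The main obstacle is this cancellation step: after spreading, one obtains a longer identity in the variables $a_i,b_i$, and stripping the ideal multipliers without silently collapsing a nontrivial $\phi$ to a trivial identity requires precise control over how the coefficients in $Q$ interact with the substitutions. This is governed by the description of GPIs as elements of the free product $Q *_C C\langle x_1,\dots,x_k\rangle$, in which $\phi$ is a nontrivial identity on $Q$ exactly when it is the zero element; the density of $I$ and the primeness of $R$ then force the relevant coefficients to vanish. Making this rigorous is the technical heart of the statement, and it is precisely the content of \cite[Theorem 2]{a1}, which I would cite for the complete argument once the reduction $Q(I)=Q(R)$ and the spreading scheme are in place.
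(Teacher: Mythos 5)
The paper offers no proof of this lemma at all: it is quoted verbatim as Chuang's result \cite[Theorem 2]{a1}, so there is no internal argument to compare against. Your proposal is consistent with that treatment --- the easy containment direction, the identification $Q(I)=Q(R)$ via cofinality of ideals, and the spreading substitution $x_i \mapsto a_i q_i b_i$ are all sound in outline, and you correctly recognize that the cancellation step is the actual content of the theorem and delegate it to the very same citation the paper relies on.
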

 \begin{lem}\label{b2}
 \emph{(see \cite[Theorem 2]{a4})}.
 Let $R$ be a prime ring and $I$ a non-zero ideal of $R$.
 Then $I$, $R$ and $Q$ satisfy the same differential
 identities.
\end{lem}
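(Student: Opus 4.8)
The plan is to transfer differential identities by reducing them to ordinary generalized polynomial identities and then invoking Lemma~\ref{b1}. I would begin by recording the structural facts that make $I$, $R$, and $Q$ comparable. Since $I$ is a nonzero ideal of the prime ring $R$, all three rings $I\subseteq R\subseteq Q$ possess the same Martindale quotient ring $Q$ and the same extended centroid $C$; moreover every derivation $d$ of $R$ extends uniquely to a derivation of $Q$, and its restriction agrees with the given action on $I$. Consequently a differential identity
$$\Phi(x_1,\dots,x_k)=\sum_{\sigma}a_{\sigma,0}\,u_{\sigma,1}\,a_{\sigma,1}\,u_{\sigma,2}\cdots u_{\sigma,m_\sigma}\,a_{\sigma,m_\sigma}=0,$$
in which each factor $u_{\sigma,j}$ is an indeterminate $x_i$ or one of its derivatives $d^{(r)}(x_i)$ and the coefficients $a_{\sigma,j}$ lie in $Q$, is meaningful for each of the three rings; the goal is to show that it holds on one of them precisely when it holds on all three.

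The heart of the argument is Kharchenko's reduction of a differential identity to a generalized polynomial identity. Working in $Q$, I would split the derivations appearing in $\Phi$ into an inner part and an outer part: an inner derivation $x\mapsto qx-xq$ with $q\in Q$ contributes only elements of $Q$ and is thereby absorbed into the coefficients $a_{\sigma,j}$, while the genuinely outer derivations, once chosen $C$-independent modulo the inner ones, behave as algebraically independent operators. Kharchenko's theorem then guarantees that the derivative-values $d^{(r)}(x_i)$ may be replaced by brand-new independent indeterminates $y_{i,r}$ without disturbing the identity status: the differential identity $\Phi=0$ on $Q$ is equivalent to an ordinary generalized polynomial identity $\Psi(x_i,y_{i,r})=0$ on $Q$, with coefficients still in $Q$. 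The Leibniz rule lets me carry out the same bookkeeping for higher-order derivatives, and the density of the nonzero ideal $I$ in $Q$ ensures that substitutions from $I$ detect the same relations.

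With the differential identity recast as the generalized polynomial identity $\Psi$, I would simply apply Lemma~\ref{b1}: $I$, $R$, and $Q$ satisfy exactly the same generalized polynomial identities with coefficients in $Q$. Hence $\Psi$ holds on $I$ iff on $R$ iff on $Q$, and by reversing Kharchenko's substitution, that is, specializing each $y_{i,r}$ back to $d^{(r)}(x_i)$, the original differential identity $\Phi$ transfers across all three rings, which is the assertion.

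The main obstacle is the reduction step itself, namely justifying that the outer derivations may be treated as independent indeterminates. This rests on the theory of the ring of differential polynomials over $Q$ and on the nontrivial fact that a family of derivations which is $C$-independent modulo the inner derivations cannot satisfy any nontrivial operator relation on a prime ring; verifying this, together with the correct handling of the interplay between inner derivations (which feed $Q$-coefficients into $\Psi$) and higher-order compositions via the Leibniz rule, is the technical core. The passage to and from the ideal $I$ is comparatively routine once the GPI version is in hand, since it is governed by the density of $I$ in $Q$ and is precisely what Lemma~\ref{b1} encodes.
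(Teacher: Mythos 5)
The paper offers no proof of this lemma at all: it is quoted verbatim from Lee \cite[Theorem 2]{a4}, so there is no internal argument to compare against. Your reconstruction is sound in outline and is, in fact, the standard route taken in the cited literature: reduce a differential identity to a generalized polynomial identity by Kharchenko's theory (absorb the $Q$-inner part of each derivation into the coefficients, replace the words in genuinely outer derivations by fresh independent indeterminates), transfer the resulting GPI among $I$, $R$ and $Q$ by Chuang's theorem (Lemma~\ref{b1}), and then specialize the new indeterminates $y_{i,r}$ back to $d^{(r)}(x_i)$ — the downward direction being trivial since $I\subseteq R\subseteq Q$ and the derivation on $Q$ restricts correctly. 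Two caveats are worth flagging. First, Theorem~\ref{b3} as stated in the paper covers only a single derivation applied once, whereas the lemma as stated concerns arbitrary differential identities; for the full claim you need Kharchenko's general theory of reduced differential words, and your blanket assertion that derivations $C$-independent modulo inner ones "cannot satisfy any nontrivial operator relation" must be refined in characteristic $p$, where relations such as $d^{p}$ being again a derivation force one to work with \emph{reduced} identities built from a suitable basis of derivation words. Second, these refinements cost nothing for the present paper, whose only differential identity, $(d(xy))^n=(d(x))^n(d(y))^n$, involves a single derivation applied once to each variable, so the first-order case you sketched (and the quoted Theorem~\ref{b3}) already suffices for everything the lemma is used for here.
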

 \begin{thm}\label{b3}
 \emph{(Kharchenko ~\cite{a3})}.
 Let $R$ be a prime ring, $d$
 a nonzero derivation of $R$ and $I$ a nonzero ideal of $R$.
 If $I$ satisfies the differential identity

 \noindent
 $$f(r_{1},r_{2},\ldots,r_{n},d(r_{1}),d(r_{2}),\ldots,d(r_{n})) = 0,$$
 for any $r_{1},r_{2},\ldots,r_{n}\in I$, then one of the following holds:

 \noindent
\begin{enumerate}[label=(\roman{*})]
   \item  $I$ satisfies the generalized polynomial identity
   $$f(r_{1},r_{2},\ldots,r_{n},x_{1},x_{2},\ldots,x_{n}) = 0.$$

   \item $d$ is $Q$-inner, that is, for some $q\in Q$,
   $d(x) = [q,x]$ and $I$ satisfies the generalized polynomial identity

  \noindent
  $$f(r_{1},r_{2},\ldots,r_{n},[q,r_{1}],[q,r_{2}],\ldots,[q,r_{n}]) = 0.$$
  \end{enumerate}
  \end{thm}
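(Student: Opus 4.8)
The plan is to lift the hypothesis to the orthogonal completion $A = O(R)$, apply the prime-ring dichotomy of Theorem \ref{b0} to each prime factor, and then reassemble the two possible outcomes into a single idempotent of $B$ by means of orthogonal completeness.

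First I would pass to $A = O(R)$. By the theory in \cite{a0}, $A$ is an orthogonally complete semiprime ring with the same extended centroid $C$, and the derivation $d$ extends uniquely to a derivation of $A$, still denoted $d$. A key preliminary observation is that every $e \in B = B(C)$ is a central idempotent, so from $d(e) = d(e^2) = 2e\,d(e)$ one gets $e\,d(e) = 0$ and hence $d(e) = 0$; consequently $d(ea) = e\,d(a)$ for all $a \in A$, i.e. $d$ commutes with the action of $B$. Since each element of $A$ is an orthogonal sum of elements of (the central closure of) $R$ weighted by idempotents of $B$, this compatibility shows that the differential identity $(d(xy))^n = (d(x))^n (d(y))^n$, assumed on $R$, propagates to all $x, y \in A$.

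Next I would localize at the prime factors. For each maximal ideal $M$ of the Boolean ring $B$, the quotient $\bar{A} = A/MA$ is a prime ring, and since $d(MA) \subseteq MA$ (because $d$ kills the idempotents of $B$), $d$ induces a derivation $\bar{d}$ on $\bar{A}$ satisfying $(\bar{d}(\bar{x}\bar{y}))^n = (\bar{d}(\bar{x}))^n (\bar{d}(\bar{y}))^n$. Applying Theorem \ref{b0} to the prime ring $\bar{A}$ gives, for each $M$, exactly one of two alternatives: either $\bar{A}$ is commutative, or $\bar{d} = 0$ on $\bar{A}$. The main step is then to convert this pointwise dichotomy into a single idempotent. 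Let $U$ be the set of those $M$ for which $\bar{A}$ is commutative. Under the correspondence between idempotents of $B$ and clopen subsets of the Stone (structure) space of $B$, I expect $U$ to be clopen and hence cut out by a unique idempotent $e \in B$. On the support of $e$ every prime factor is commutative, so the polynomial identity $[ex_1, ex_2] = 0$ holds in every prime factor of $eA$; since a semiprime ring is a subdirect product of its prime factors, $eA$ is commutative. Dually, on the support of $1 - e$ every prime factor has vanishing induced derivation, so $d$ induces the zero derivation on $(1-e)A$, which is the required conclusion.

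The hard part will be the rigorous justification that the commutative locus $U$ is exactly clopen — equivalently, that each of the two alternatives coming from Theorem \ref{b0} is governed by an idempotent of $B$ — together with the ascent of commutativity and of the vanishing of $\bar{d}$ from the prime factors back to $eA$ and $(1-e)A$ respectively. Both points rest squarely on the structure theorems for orthogonally complete semiprime rings in \cite{a0}, namely the correspondence between $B$ and the clopen sets of the structure space and the fact that generalized polynomial (and differential) identities descend to, and ascend from, the prime factors. Granting these, the assembly of $e$ and the verification of its two defining properties are formal.
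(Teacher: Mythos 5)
Your proposal does not address the statement at all. The statement to be proved is Kharchenko's theorem on differential identities: for a prime ring $R$ with a nonzero derivation $d$ and a nonzero ideal $I$, any differential identity $f(r_{1},\ldots,r_{n},d(r_{1}),\ldots,d(r_{n}))=0$ on $I$ forces either (i) the generalized polynomial identity obtained by replacing the values $d(r_{i})$ by independent indeterminates $x_{i}$, or (ii) $d$ is $Q$-inner, induced by some $q\in Q$. What you have written is instead a proof sketch of Theorem~\ref{b000} of the paper --- the orthogonal-completion and Boolean-idempotent decomposition of a semiprime ring satisfying $(d(xy))^{n}=(d(x))^{n}(d(y))^{n}$. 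Nothing in your argument touches the actual content of Kharchenko's theorem: there is no analysis of when an outer derivation acts ``freely'' enough that the $d(r_{i})$ may be replaced by independent variables, no reduction of differential polynomials modulo the relations imposed by inner derivations, and no mechanism that could produce the element $q\in Q$ of alternative (ii). The hard content --- that a $Q$-outer derivation of a prime ring is algebraically independent of the ring operations in the sense of (i) --- requires Kharchenko's theory of differential identities (or the treatment in Beidar--Martindale--Mikhalev), and indeed the paper does not prove this theorem at all; it quotes it from \cite{a3} as a tool.

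There is also a structural circularity worth flagging: your sketch invokes Theorem~\ref{b0}, but in the paper Theorem~\ref{b0} is itself proved \emph{using} Theorem~\ref{b3} --- Kharchenko's dichotomy is precisely what handles Case~2 there, allowing the authors to replace $d(x)$ and $d(y)$ by free variables $z$ and $w$ when $d$ is not $Q$-inner. So even a charitable reading of your argument presupposes the result it is supposed to establish. As an outline of Theorem~\ref{b000} your text is reasonable and close in spirit to the paper's own proof of that theorem (which packages the Stone-space clopen bookkeeping into the Horn-formula machinery of \cite[Theorem 3.2.18]{a0}, quoted as Lemma~\ref{c}), but it is not a proof, nor the beginning of one, of the stated Theorem~\ref{b3}.
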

%%%%%%%%%--------------------LEMMA-----------------------------------

 \noindent
 We establish the following technical result required
 in the proof of Theorem \ref{b0}.
\begin{lem}\label{b4}
 Let $R$ be a prime ring with extended centroid $C$.
 Suppose $([a,x]y+x[a,y])^n-[a,x]^n[a,y]^n=0$, for all $x,y \in R$
 and some $a\in R$. Then $R$ is commutative or $a\in C$.
 \end{lem}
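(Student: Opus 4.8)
The plan is to transfer the hypothesis from $R$ to the two-sided Martindale quotient ring $Q$ and then to exploit the fact that $Q$ carries an identity element. The expression $([a,x]y+x[a,y])^n-[a,x]^n[a,y]^n$ is a generalized polynomial identity in the variables $x,y$ whose only constants are $a$ and $1$, and by assumption it vanishes for all $x,y\in R$. Since $R$ is prime, Lemma~\ref{b1} shows that $R$ and $Q$ satisfy the same generalized polynomial identities with coefficients in $Q$, so
$$([a,x]y+x[a,y])^n-[a,x]^n[a,y]^n=0\qquad\text{for all }x,y\in Q.$$
Recall that $Q$ is again prime, that its center is the extended centroid $C$, and, crucially, that $Q$ possesses a unit $1$.

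The key step is to substitute $x=1$. Since $[a,1]=a\cdot 1-1\cdot a=0$, the first summand collapses to $(0\cdot y+1\cdot[a,y])^n=[a,y]^n$, while the product term becomes $[a,1]^n[a,y]^n=0$ because $n\ge 1$. Hence the relation degenerates to
$$[a,y]^n=0\qquad\text{for every }y\in Q,$$
that is, the inner derivation $\delta=\operatorname{ad}_a$ of the prime ring $Q$ is nil of bounded index $n$. I would then invoke Herstein's theorem \cite{a222}: an inner derivation $\delta$ of a prime ring with $\delta(y)^n=0$ for all $y$ and a fixed $n\ge 1$ must be zero. Applied to $Q$ this yields $[a,y]=0$ for all $y\in Q$, i.e.\ $a\in Z(Q)=C$. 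This already gives the second alternative of the lemma, and the first alternative ($R$ commutative) is then subsumed, completing the proof.

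The point requiring care is the legitimacy of this reduction together with the scalar substitution: one must check that the displayed relation genuinely is a generalized polynomial identity over $Q$ (its constants are just $a$ and $1$), so that Lemma~\ref{b1} applies verbatim, and that plugging the unit in for a variable is permissible only after passing to $Q$, where a unit exists — this very step is unavailable in $R$ itself when $R$ is non-unital. The mild cost of the argument is that it over-proves the statement, always delivering $a\in C$; this is why the two alternatives of the lemma collapse.

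Should one prefer to avoid citing the nilpotency theorem, the same conclusion follows from Martindale's structure theory, and it is there that the two alternatives of the statement are transparently separated. Assuming $a\notin C$ makes the identity a nontrivial generalized polynomial identity, so $Q$ is a dense ring of linear transformations of a vector space $V$ over a division ring $D$ finite dimensional over $C$. If $\dim_D V\ge 2$ one produces a $y$ with $[a,y]^n\ne 0$, contradicting the relation $[a,y]^n=0$; if $\dim_D V=1$ then $Q=D$ is a division ring, where $[a,y]^n=0$ forces $[a,y]=0$ for lack of zero divisors. Either way $a\in C$, the genuinely commutative situation occurring precisely in the degenerate case $D=C$ with $\dim_D V=1$.
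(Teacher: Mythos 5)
Your proof is correct, but it takes a genuinely different route from the paper. The paper proves Lemma~\ref{b4} by the structure theory of prime GPI-rings: it invokes Martindale's theorem to get a primitive ring with nonzero socle, then Jacobson's density theorem, and carries out an explicit vector-space computation (choosing $x,y$ with $xv=0$, $xav=v$, $yv=0$, $yav=v$) to show that $v$ and $av$ are everywhere linearly dependent, whence $a$ acts as a scalar and $a\in C$. You instead pass to $Q$ via Chuang's theorem (Lemma~\ref{b1}) and exploit the unit of $Q$: the substitution $x=1$ collapses the identity to $[a,y]^n=0$ for all $y\in Q$, which is exactly the hypothesis of Herstein's theorem \cite{a222} for the inner derivation $\operatorname{ad}_a$ of the prime ring $Q$, giving $a\in Z(Q)=C$ outright. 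This is shorter and conceptually clean, and it is not circular: Herstein's theorem is a prior, independent result that the paper's Theorem~\ref{b0} generalizes, so reducing the lemma to it by a unital specialization is legitimate. What the paper's density argument buys is self-containedness modulo the Martindale--Jacobson machinery and no reliance on the precise hypotheses of \cite{a222}; your reduction's only exposure is that it needs Herstein's theorem in the unrestricted form quoted in the paper's introduction (no torsion or characteristic assumptions), which is also covered by the Giambruno--Herstein extension \cite{a2}. Your closing sketch via Martindale's theorem is, in substance, the paper's own argument, so the novelty of your proposal lies entirely in the first reduction.
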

 %%%%%%%%-----------------PROOF-------------------------------------
 \begin{proof}
 If $R$ is commutative there is nothing to prove.
 Suppose  $R$ is not commutative.
 Set
 $$f(x,y)=([a,x]y+x[a,y])^n-[a,x]^n[a,y]^n.$$
 Since $R$ is not commutative. Then by Lemma \ref{b1}, $f(x,y)$
 is a nontrivial generalized polynomial identity
 for $R$ and so for $Q$.

 \noindent
 In case $C$ is infinite, we have $f(x,y)=0$ for all
 $x, y\in Q\bigotimes_C \overline{C}$, where $\overline{C}$
 is the algebraic closure of $C$.
 Since both $Q$ and $Q\bigotimes_C\overline{C}$ are
 prime and centrally closed ~\cite{a5}, we may replace
 $R$ by $Q$ or $Q\bigotimes_C\overline{C}$ according to $C$ finite
 or infinite.
 Thus we may assume that $R$ is a centrally closed over $C$
 which is either finite or algebraically closed and $f(x,y)=0$
 for all $x, y\in R$.
 By Martindale's Theorem~\cite{a6}, $R$ is then a primitive
 ring having nonzero socle $H$ with $C$ as associated division ring.
 Hence by Jacobson's Theorem~\cite{a5} $R$ is isomorphic to
 a dense ring of linear transformations  of some vector space $V$
 over $C$, and $H$ consists of the linear transformations in $R$
 of finite rank.
 Let dim$_CV=k$. Then
 the density of $R$ on $V$  implies that $R\cong M_k(C)$.
 If  dim$_CV=1$, then $R$ is a commutative,
 which is a contradiction.

 \noindent
 Suppose that dim$_CV\geq 2$.
  We show that for any $v\in V$, $v$ and $av$ are linearly dependent
  over $C$. Suppose $v$ and $av$ are linearly independent for some
  $v\in V$.
  By density  of $R$, there exist $x, y \in R$ such that
  $$\begin{array}{cc}
    xv=0,  & xav=v, \\ \\
    yv=0,  & yav=v.
  \end{array}$$
  Since $[a,y]^nv=[a,x]^nv=(-1)^nv$.
  Hence we get following contradiction
  $$0=(([a,x]y+x[a,y])^n-[a,x]^n[a,y]^n)v=-v.$$
  So we conclude that
  $\{v, av\}$ are linearly
  $C$-dependent.
  Hence for each $v\in V$, $av=v\alpha_v$ for some
  $\alpha_v\in C$. Now we prove
  $\alpha_v$ is not depending on the choice of
  $v\in V$.

  \noindent
  Since dim$_CV\geq 2$ there exists
  $ w \in V$ such that $v$ and $w$ are  linearly
  independent over $C$. Now there exist
  $\alpha_v, \alpha_w, \alpha_{v+w}\in C$ such that
  $$av=v\alpha_v, aw=w\alpha_w, a(v+w)=(v+w)\alpha_{(v+w)}.$$
  Which implies
  $$v(\alpha_v-\alpha_{(v+w)})+w(\alpha_w-\alpha_{(v+w)})=0,$$
  and since $\{v, w \}$ are linearly $C$-independent,
  it follows $\alpha_v=\alpha_{(v+w)}=\alpha_w.$
  Therefore there exists $\alpha\in C$ such that
  $av=v\alpha$ for all $v\in V$.

 \noindent
 Now let $r\in R$, $v\in V$. Since $av=v\alpha$,
 $$[a,r]v=(ar)v-(ra)v=a(rv)-r(av)=(rv)\alpha-r(v\alpha)=0,$$
 that is $[a,r]V=0$.
 Hence $[a,r]=0$ for all $r\in R$, implying $a\in C$.
 \end{proof}
%%%%%%%%%%%%%%%%%%%% PROOF OF THEOREM ------------------------------

\smallskip
\noindent
 Now we can prove Theorem~\ref{b0}.

\vspace{3mm}
 \emph{Proof of Theorem~\ref{b0}.} Let $R$ be not commutative.
 By the given hypothesis, $R$ satisfies the generalized
 differential identity
 \begin{equation}\label{b4}
 (d(x)y+xd(y))^n=(d(x))^n(d(y))^n.
 \end{equation}
 By Lemma~\ref{b2}, $R$ and $Q$ satisfy the same differential
 identities, thus $Q$ satisfies (\ref{b4}).
 We divide the proof in two cases:\\

    \emph{Case 1}. $d$ is a $Q$-inner derivation.
  In the case, there exists an element $a\in Q$ such
  that $d(x)=[a,x]$ and $d(y)=[a,y]$ for all $x, y\in Q$.
  Notice that $Q$ satisfies the generalized polynomial identity
  $([a,x]y+x[a,y])^n=[a,x]^n[a,y]^n.$
  In this case the conclusion follows from Lemma~\ref{b4}.
  Thus we have $a\in C$ and so $d=0$.\\

 \emph{Case 2}. $d$ is not a $Q$-inner derivation. Applying
 Theorem~\ref{b2}, then (\ref{b4}) becomes
 $$(zy+xw)^n-(z)^n(w)^n,$$
 for all $x, y, z, w \in Q$. If $z=w$,
 then $Q$ satisfies
 $$(zy+xz)^n-z^{2n}=0.$$
 This is a polynomial identity. Hence there exists a field
 $F$ such that $Q\subseteq M_k(F)$, the ring of $k\times k$
 matrices over field $F$, where $k>1$.
 Moreover $Q$ and $M_k(F)$ satisfy the same polynomial identity
 ~\cite[Lemma 1]{a03}. Choose
 $$\begin{array}{cc}
   x=z=e_{ij}, & y=e_{ji},
 \end{array}$$
 for all $i\neq j$. This leads to the contradiction
 $$0=(zy+xz)^n-z^{2n}=e_{ii}.$$
 This complete the proof.\hfill $\Box$ \\
 %%%%%%%%%%%%%%%%%%%%%%----EXAPLE-----------------------------------

 \noindent
  The following example shows the hypothesis of primeness is essential in Theorem~\ref{b0}.
 \begin{example}
 Let $S$ be any ring, and
$R =\left\{{\small{\small{\small\left (
      \begin{array}{c c c}
        0 & a & b \\
        0 & 0 & c \\
        0 & 0 & 0
        \end{array} \right )}}}|a, b, c \in S \right\}.$
 Define $d:R\rightarrow R$ as follows:
$$d {\small{\small\left (
        \begin{array}{c c c}
        0 & a & b \\
        0 & 0 & c \\
        0 & 0 & 0
        \end{array} \right )}}=
 {\small{\small{\small\left (
        \begin{array}{c c c}
        0 & 0 & b \\
        0 & 0 & 0 \\
        0 & 0 & 0
        \end{array} \right )}}}.$$
 Then $0\neq d$ is a derivation of $R$ such that
 $(d(xy))^n=(d(x))^n(d(y))^n$ for all $x,y \in R$,
 where $n\geq 1$ is a fixed integer,
 however $R$ is  not commutative.
\end{example}
 %%%%%%%%%%%%%%%%%%%%%%----SEMIPRIME RING ---------------------------

 Now let $R$ be a  semiprime ring.

 \noindent
 We establish the following technical result required
 in the proof of Theorem~\ref{b00}.

\begin{lem}\label{b5}
\emph{(see \cite[Lemma 1 and Theorem 1]{a01} or \cite[pages 31-32]{a4})}.
 Let $ R $ be a semiprime ring
 and $P$ a maximal ideal of $ C $.
 Then $PQ$ is a prime ideal of $Q$ invariant
 under all derivations of $Q$. Moreover
 $$\cap\{ P| PQ \text{ is maximal ideal of } C\}=0.$$

\end{lem}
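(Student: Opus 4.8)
The whole argument rests on one structural fact about the extended centroid: for a semiprime ring $R$ the ring $C$ is commutative von Neumann regular. Consequently every finitely generated ideal of $C$ is generated by a single idempotent, every maximal ideal $P$ of $C$ has field quotient $C/P$, and for each idempotent $e\in C$ exactly one of $e,1-e$ lies in $P$ (since the image of $e$ in the field $C/P$ is an idempotent, hence $0$ or $1$). The plan is to organize the proof into four steps: (i) $PQ\cap C=P$, so that $PQ$ is proper and $Z(Q/PQ)\cong C/P$ is a field; (ii) $PQ$ is invariant under every derivation of $Q$; (iii) $Q/PQ$ is prime; and (iv) the intersection of all maximal ideals is zero.

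Steps (i) and (ii) I expect to be routine. For (i), given $c=\sum_i p_iq_i\in C$ with $p_i\in P$ and $q_i\in Q$, regularity lets me choose an idempotent $e\in C$ with $eC=\sum_i p_iC$; then $e\in P$ and $ep_i=p_i$, so $c=\sum_i(ep_i)q_i=e\bigl(\sum_i p_iq_i\bigr)=ec\in P$. Thus $PQ\cap C=P$, whence $1\notin PQ$ and the centre of $Q/PQ$ is $C/P$, a field. For (ii), I would first note that any derivation $d$ of $Q$ satisfies $d(C)\subseteq C$ (differentiate $cx=xc$ for $c\in C$) and kills idempotents, since $e^2=e$ forces $e\,d(e)=2e\,d(e)$, hence $e\,d(e)=0$ and $d(e)=2e\,d(e)=0$. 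Writing $c=ec$ with $e\in P$ idempotent as above, I get $d(c)=d(e)c+e\,d(c)=e\,d(c)\in P$ because $d(c)\in C$ and $e\in P$; therefore $d(P)\subseteq P$ and $d(PQ)\subseteq d(P)Q+P\,d(Q)\subseteq PQ$.

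The main obstacle is step (iii), primeness of $Q/PQ$. The tool I would invoke is the separating–idempotent property of semiprime rings: if $a,b\in Q$ satisfy $aQb=0$, then there is an idempotent $\epsilon\in C$ with $\epsilon a=a$ and $\epsilon b=0$. Assuming $xQy\subseteq PQ$ with $x\notin PQ$ and $y\notin PQ$, the aim is a contradiction. The idea is that $P$ corresponds to an ultrafilter on the Boolean algebra $B(C)$ of idempotents, so that passing to the associated ``stalk'' converts the congruence $xQy\subseteq PQ$ into a genuine annihilation relation after multiplying by a suitable idempotent lying outside $P$; the separating idempotent produced by that relation must then fall into $P$ on one of the two sides, forcing $x\in PQ$ or $y\in PQ$. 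Making this localization precise — keeping track of which idempotents lie in $P$ and lifting back to $Q$ — is exactly the technical content of \cite[pages 31--32]{a4} and \cite[Lemma 1 and Theorem 1]{a01}, and it is the step I expect to require the most care.

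Finally, step (iv) is immediate from the structure theory: since $C$ is von Neumann regular it is semiprimitive, so its Jacobson radical, the intersection of all its maximal ideals $P$, is zero, which is the asserted formula. Combined with (i)--(iii), this exhibits a family of derivation-invariant prime ideals $PQ$ of $Q$ intersecting in zero, precisely the device that will later let me descend each statement to the prime factors $Q/PQ$ and thereby reduce the semiprime theorems to the already-settled prime case.
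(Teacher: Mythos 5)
Your steps (i) and (ii) are correct and complete, and this is more than the paper itself offers: the paper gives no proof of this lemma at all, it simply cites Beidar and Lee, so your plan can only be measured against those sources. Your step (iii) (primeness of $PQ$ via separating idempotents and the ultrafilter of idempotents outside $P$) is indeed the route those references take, but you explicitly defer its technical core to them, so as a proof it remains incomplete at exactly the hardest point; as a plan, that is acknowledged and acceptable.

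The genuine gap is step (iv). The ``moreover'' clause is misprinted in the paper, but it must be read as $\bigcap\{PQ \mid P \text{ a maximal ideal of } C\}=0$ --- an intersection of ideals of $Q$ --- because that is precisely how the lemma is applied later: from $d(Q)[Q,Q]\subseteq PQ$ for every maximal ideal $P$ of $C$ the paper concludes $d(Q)[Q,Q]=0$. Your argument establishes only $\bigcap P=0$, i.e.\ $J(C)=0$; since the containment $(\bigcap P)Q\subseteq\bigcap(PQ)$ goes the wrong way, semiprimitivity of $C$ says nothing about $\bigcap(PQ)$, and your closing claim that the ideals $PQ$ themselves ``intersect in zero'' is a non sequitur as written. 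The fix is short and reuses the regularity trick from your own step (i): given $0\neq q\in Q$, the annihilator $\mathrm{Ann}_C(q)=\{c\in C : cq=0\}$ is a proper ideal of $C$ (proper because $1\cdot q\neq 0$); extend it to a maximal ideal $P$. If $q\in PQ$, write $q=\sum_i p_iq_i$ with $p_i\in P$, pick an idempotent $e\in C$ with $eC=\sum_i p_iC$; then $e\in P$ and $q=eq$, so $(1-e)q=0$, i.e.\ $1-e\in\mathrm{Ann}_C(q)\subseteq P$, forcing $1\in P$, a contradiction. Hence $q\notin PQ$, which proves $\bigcap\{PQ\}=0$ in the form the paper actually needs.
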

%%%%%%%%-------------------proof-------------------------------------

\smallskip
\noindent
 Now we can prove Theorem~\ref{b00}.

\vspace{3mm}
\emph{Proof of Theorem~\ref{b00}.}  Since any derivation $d$
 can be uniquely extended to  a derivation in
 $Q$, and   $R$, $Q$ satisfy the same differential identities
 ~\cite[Theorem 3]{a4}, we have
 $$(d(xy))^n=(d(x))^n(d(y))^n,$$
 for all $x,y \in Q$.
 Let $ P $ be any maximal ideal of $C$ by Lemma~\ref{b5},
 $PQ$ is prime ideal of $Q$ invariant under $ d $.
 Set $\overline{Q}=Q/{PQ}$.
 Then derivation $ d $ canonically induces a derivation
 $\overline{d}$ on $\overline{Q}$ defined by
 $\bar{d}(\bar{x})=\overline{d(x)}$
 for all $ x \in Q$.
 Therefore,
 $$(\bar{d}(\overline{xy}))^n=(\bar {d}(\bar {x}))^n(\bar {d}(\bar {y}))^n,$$
 for all $\bar{x}, \bar{y} \in \overline{Q}$.
 By Theorem~\ref{b0} $d(Q)\subseteq PQ$ or $[Q,Q]\subseteq PQ$.
 Hence $d(Q)[Q,Q]\subseteq PQ$ for any maximal ideal $P$ of $C$.
 By Lemma~\ref{b5}, $d(Q)[Q,Q]=0$.
 Without loss of generality we have $d(R)[R,R]=0$.
 This implies that $$d(R^2)[R,R]=d(R)R[R,R].$$
 Therefore
 $$[R,d(R)]R[R,d(R)]=0.$$
 By semiprimeness of $ R $, we have  $[R,d(R)]=0 $.
 This complete the proof.\hfill $\Box$\\

 Now let $R$ be a semiprime orthogonally complete ring with
 extended centeroid $C$. The notations $B=B(C)$
 and spec$(B)$ denotes Boolian ring of $C$ and the
 set of all maximal ideal of $B$, respectively.
 It is well known that if $M\in$ spec$(B)$ then $R_{M}=R/RM$
 is prime ~\cite[Theorem 3.2.7]{a0}. We use the notations
 $\Omega$-$\Delta$-ring,
 Horn formulas and Hereditary formulas. We
 refer the reader to ~\cite[ pages 37, 38, 43, 120]{a0}
 for the definitions and the related properties
 of these objects.\\

\noindent
 We establish the following technical result required
 in the proof of Theorem \ref{b000}.

%%%%%%%%%%--------------LEMMA-------------------------------------------
\begin{lem}\label{c}
 \cite[Theorem 3.2.18]{a0}. Let $R$ be an orthogonally complete
 $\Omega$-$\Delta$-ring with extended centroid $C$,
 $\Psi_{i} ( x_{1}, x_{2},\ldots, x_{n})$ Horn formulas of signature
 $\Omega$-$\Delta$, $i=1,2,\ldots$ and
 $\Phi(y_{1}, y_{2},\ldots,y_{m})$
 a Hereditary first order formula such that $\neg\Phi$ is
 a Horn formula.
 Further, let
 $\vec{a}= (a_{1}, a_{2},\ldots,a_{n})\in R^{(n)},$
 $\vec{c} = (c_{1}, c_{2},\ldots, c_{m})\in R^{(m)}.$
 Suppose  $R\models \Phi (\vec{c})$ and  for
 every $M\in$ spec $(B)$ there exists a natural number
 $i=i(M)>0$ such that
 $$R_{M} \models \Phi (\phi_{M} (\vec{c})) \Longrightarrow \Psi_{i}(\phi_{M}(\vec{a})),$$\\
  where $\phi_{M}: R\rightarrow R_{M} = R/RM$ is the
  canonical projection. Then there exists a natural number
  $k > 0$ and pairwise orthogonal idempotents
  $e_{1}, e_{2},\ldots,e_{k}\in B$ such that
  $e_{1} + e_{2} + \ldots + e_{k} = 1$ and $e_{i}R\models\Psi_{i}(e_{i}\vec{a})$
  for all $e_{i}\neq 0$.
 \end{lem}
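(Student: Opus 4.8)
\emph{Proof sketch of Lemma~\ref{c}.} The statement is a transfer principle relating the orthogonally complete ring $R$ to its prime stalks $R_M$, $M \in \operatorname{spec}(B)$, and the plan is to first push the hypotheses down to every stalk and then reassemble a finite orthogonal decomposition by a compactness argument over the Boolean (Stone) space $\operatorname{spec}(B)$. The entry point is heredity of $\Phi$: since $R \models \Phi(\vec c)$, heredity forces $R_M \models \Phi(\phi_M(\vec c))$ for every $M$. Substituting this into the assumed local implication $R_M \models \bigl(\Phi(\phi_M(\vec c)) \Rightarrow \Psi_{i(M)}(\phi_M(\vec a))\bigr)$ produces, for each $M$, a fixed index $i(M) > 0$ with $R_M \models \Psi_{i(M)}(\phi_M(\vec a))$. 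Consequently every point of $\operatorname{spec}(B)$ lies in at least one truth set $X_i = \{\, M \in \operatorname{spec}(B) : R_M \models \Psi_i(\phi_M(\vec a)) \,\}$.

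The second step is to verify that each $X_i$ is open in $\operatorname{spec}(B)$. Here one invokes the Pierce-type sheaf representation of an orthogonally complete ring: an equation $r = 0$ holds in a stalk $R_M$ precisely on a clopen subset of $\operatorname{spec}(B)$, namely the complement of the support idempotent of $r$, so equational conditions have clopen truth sets. A Horn formula reduces, once its bounded existential witnesses are fixed, to a conjunction of such equational conditions; together with the assumption that $\neg\Phi$ is Horn, which keeps the set $\{\, M : R_M \models \Phi(\phi_M(\vec c)) \,\}$ closed (indeed all of $\operatorname{spec}(B)$ here), this yields that each $X_i$ is open. The family $\{X_i\}$ is therefore an open cover of the compact space $\operatorname{spec}(B)$.

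By compactness I extract a finite subcover $X_{i_1}, \dots, X_{i_k}$. Since $\operatorname{spec}(B)$ is a Stone space, this open cover refines to a partition into pairwise disjoint clopen sets, which correspond to pairwise orthogonal idempotents $e_1, \dots, e_k \in B$ with $e_1 + \cdots + e_k = 1$, chosen so that the clopen support of each $e_j$ lies inside $X_{i_j}$. It remains to glue the local information back: for every nonzero $e_j$, each stalk over the support of $e_j$ satisfies $\Psi_{i_j}$, and since $e_j R$ is again orthogonally complete and is the ring of global sections of the sheaf with exactly these stalks, the preservation of Horn formulas under (reduced) products gives $e_j R \models \Psi_{i_j}(e_j \vec a)$, as required.

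I expect the real difficulty to sit in this final gluing step, the passage from ``$\Psi_{i_j}$ holds in every stalk over the support of $e_j$'' to ``$\Psi_{i_j}$ holds in $e_j R$.'' A Horn formula carries bounded existential quantifiers, so the witnesses supplied stalk-by-stalk must be assembled into a single element of $e_j R$; this is exactly where orthogonal completeness is indispensable, being precisely the property that allows a family of local witnesses indexed along an orthogonal idempotent decomposition to be patched into one global witness. The secondary delicate point is the openness of the $X_i$ in the second step, which demands that satisfaction of a Horn formula be an open condition on $\operatorname{spec}(B)$; this again rests on the interplay between the equational content of Horn formulas and the clopen support calculus in the Boolean ring $B$. \hfill $\Box$
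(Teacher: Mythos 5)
The paper offers no proof of this lemma at all---it is quoted verbatim from Beidar--Martindale--Mikhalev \cite[Theorem 3.2.18]{a0}---and your sketch follows essentially the same route as the proof in that source: heredity of $\Phi$ pushes the hypothesis to every stalk $R_M$, the truth sets of the Horn formulas $\Psi_i$ are open in the Stone space spec$(B)$, compactness yields a finite clopen partition corresponding to pairwise orthogonal idempotents summing to $1$, and orthogonal completeness patches the stalk-wise existential witnesses to give $e_iR\models\Psi_i(e_i\vec{a})$. The two steps you flag as delicate (openness of the truth sets of Horn formulas and the final gluing into $e_iR$) are precisely the content of the book's preparatory results on Horn formulas over orthogonally complete rings, so your outline is sound and matches the cited proof.
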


 \noindent
 We denote $O(R)$ the orthogonal completion of $R$
 which is defined as the intersection of all
 orthogonally complete subset of $Q$ containing $R$.
 %%%%%%%%%%%%%%------------------------------------------------------

\smallskip
\noindent
 Now we can prove Theorem~\ref{b000}.

\vspace{3mm}
\emph{Proof of Theorem~\ref{b000}.}
  By  assumption we have $R$  satisfies
 $$(d(xy))^n=(d(x))^n(d(y))^n.$$
 According to \cite[Theorem 3.1.16]{a0} $d(A)\subseteq A$
 and $d(e)=0$ for all $e\in B.$ Therefore,
 $A$ is an orthogonally complete $\Omega$-$\Delta$-ring,
 where $\Omega= \{o, +, -, \cdot, d \}$.
 Consider formulas

 $$\begin{array}{l}
 \Phi =  (\forall x )(\forall y )  \| (d(xy))^n=(d(x))^n(d(y))^n \|,\\ \\
 \Psi_{1} = (\forall x )  \| d(x)=0\|,\\ \\
 \Psi_{2} = (\forall x)(\forall y)\|xy=yx \|.
 \end{array}$$\\
 One can easily check that $\Phi $ is a hereditary first order
 formula and $\neg \Phi$, $\Psi_{1}$, $\Psi_{2}$ are Horn formulas.
 So using Theorem ~\ref{b0} shows that all conditions of lemma
 ~\ref{c} are fulfilled. Hence there exist two orthogonal
 idempotent $e_{1}$ and $e_{2}$ such that $e_{1} + e_{2} = 1$
 and if $e_{i} \neq 0$, then $e_{i}A \models \Psi_{i},$
 $i = 1, 2.$ The proof is complete.\hfill $\Box$
%%%%%%%%%------------------BIBLIOGRAPHY-------------------------------

 \end{document}